\newtheorem{thm}{Theorem}[section]
\newtheorem{cor}[thm]{Corollary}
\newtheorem{prop}[thm]{Proposition}
\newtheorem*{thDWT}{Devinatz-Widom Theorem}
\theoremstyle{definition}
\newtheorem{rem}[thm]{Remark}
\newtheorem{qe}[thm]{Question}
\numberwithin{equation}{section}
\begin{document}
\title{ Examples of de Branges-Rovnyak spaces generated by nonextreme functions}
\author{Bartosz {\L}anucha, Maria T. Nowak}

\address{
Bartosz {\L}anucha,  \newline Institute of Mathematics,
\newline Maria Curie-Sk{\l}odowska University, \newline pl. M.
Curie-Sk{\l}odowskiej 1, \newline 20-031 Lublin, Poland}
\email{bartosz.lanucha@poczta.umcs.lublin.pl}

\address{
Maria T. Nowak,  \newline Institute of Mathematics,
\newline Maria Curie-Sk{\l}odowska University, \newline pl. M.
Curie-Sk{\l}odowskiej 1, \newline 20-031 Lublin, Poland}
\email{mt.nowak@poczta.umcs.lublin.pl}

\subjclass[2010]{47B32, 30H10, 30H15} \keywords{Hardy space, de
Branges-Rovnyak space, Smirnov class, rigid function}

\begin{abstract} We describe de Branges-Rovnyak spaces $\mathcal
H (b_{\alpha})$, $\alpha>0$, where the function $b_{\alpha}$ is not
extreme in the unit ball of $H^{\infty}$ on the unit disk $\mathbb D
$, defined by the equality $b_{\alpha}(z)/a_{\alpha}(z)=
(1-z)^{-\alpha}$, $z\in\mathbb D$, where  $a_{\alpha}$  is the
outer function such that $a_{\alpha}(0)>0$ and
$|a_{\alpha}|^2+|b_{\alpha}|^2= 1$ a.e. on $\partial \mathbb D$.
\end{abstract}

\maketitle
\section{Introduction}

Let $H^2$ denote the standard Hardy space in the open unit disk
$\mathbb D$ and let $\mathbb T=\partial \mathbb D$. For $\chi\in
L^{\infty}(\mathbb{T})$ let $T_{\chi}$ denote the bounded Toeplitz
operator on $H^2$, that is, $T_{\chi}f=P_{+}(\chi f)$, where $P_{+}$
is the orthogonal projection of $L^{2}(\mathbb{T})$ onto $H^2$. In
particular, $S=T_{e^{it}}$ is called the shift operator. We will
denote by $\mathcal{M}(\chi)$ the range of $T_{\chi}$ equipped with
the range norm, that is, the norm that makes the operator $T_{\chi}$
a coisometry of $H^2$ onto $\mathcal{M}(\chi)$.

Given a function $b$ in the unit ball of $H^{\infty}$, the
\textit{de Branges-Rovnyak space $\mathcal{H}(b)$} is the image of
$H^2$ under the operator $(I-T_bT_{\overline{b}})^{1/2}$ with the
corresponding range norm $\|\cdot\|_b$. 

 It is known that $\mathcal{H}(b)$ is a Hilbert space with
reproducing kernel
$$k_w^b(z)=\frac{1-\overline{b(w)}b(z)}{1-\overline{w}z}\quad(z,w\in\mathbb{D}).$$

Here we are interested in the case when the function $b$ is not an
extreme point of the unit ball of $H^{\infty}$. Then there exists an
outer function $a\in H^{\infty}$ for which $|a|^2+|b|^2=1$ a.e. on
$\mathbb{T}$. Moreover, if we suppose that $a(0)>0$, then $a$ is
uniquely determined, and, following Sarason,  we say that $(b,a)$ is
a \emph{pair}. The function $a$ is sometimes called the
\emph{Pythagorean mate} associated with $b$.

It is known that both $\mathcal{M}(a)$ and
$\mathcal{M}(\overline{a})$ are contained contractively in
$\mathcal{H}(b)$ (see \cite[p. 25]{sarason}). Moreover, if $(b,a)$
is a corona pair, that is, $|a|+|b|$ is bounded away from $0$ in
$\mathbb{D}$, then $\mathcal{H}(b)=\mathcal{M}(\overline{a})$ (see
e.g. \cite[p. 62]{sarason}).

Let us recall  that the Smirnov class $\mathcal{N}^{+}$ consists of
those holomorphic functions in $\mathbb{D}$ that are quotients of
functions in $H^{\infty}$ in which the denominators are outer
functions. If $(b,a)$ is a pair, then the quotient $\varphi=b/a$ is
in $\mathcal{N}^{+}$, and conversely, for every nonzero function
$\varphi\in\mathcal{N}^{+}$ there exists a unique pair $(b,a)$ such
that $\varphi=b/a$ (\cite{sarason2}).

 Many properties of $\mathcal{H}(b)$ can be expressed in
terms of the function $\varphi=b/a$ in the Smirnov class
$\mathcal{N}^{+}$. It is worth noting here that if $\varphi$ is
rational, then the functions $a$ and $b$ in the representation of
$\varphi$ are also rational (see \cite{sarason2}) and in such a case
$ (b,a)$ is called  a rational pair. Recently  spaces $\mathcal
H(b)$ for rational pairs have been studied in \cite{ransford2},
\cite{ross} and  \cite{LN}. In   \cite{ross} the authors described
also the spaces $\mathcal H (b^r)$, where $b$ is a rational outer
funtion in the closed unit ball of $H^{\infty}$ and $r$ is a
positive number.

 Here
we describe the Branges-Rovnyak spaces $\mathcal{H}(b_{\alpha})$,
$\alpha>0$, where $(b_{\alpha},a_{\alpha})$ is such a pair that
$$\varphi_{\alpha}(z)= \frac{b_{\alpha}(z)}{a_{\alpha}(z)}= \frac1{(1-z)^{\alpha}}$$
(principal branch).

For a function $\varphi$ that is holomorphic on $\mathbb{D}$ we
define $T_{\varphi}$ to be the operator of multiplication by
$\varphi$ on the domain $\mathcal{D}(T_{\varphi})=\{f\in H^2\colon\
\varphi f\in H^2\}$. It is well known that $T_{\varphi}$ is bounded
on $H^2$ if and only if $\varphi\in H^{\infty}$. Moreover, it was
proved in \cite{sarason2} that the domain $\mathcal{D}(T_{\varphi})$
is dense in $H^2$ if and only if $\varphi\in \mathcal{N}^{+}$. More
precisely, if $\varphi$ is a nonzero function in $\mathcal{N}^{+}$
with canonical representation $\varphi={b}/{a}$, then
$\mathcal{D}(T_{\varphi})=aH^2$. In this case $T_{\varphi}$ has a
unique, densely defined adjoint $T_{\varphi}^{*}$. In what follows
we denote $T_{\overline{\varphi}}=T_{\varphi}^{*}$ (see \cite[p.
286]{sarason2} for more details). The next theorem says that the
domain of $T_{\overline{\varphi}}$ coincides with the de
Branges-Rovnyak space $\mathcal{H}(b)$.

\begin{thm}[\cite{sarason2}]\label{sarsar}
Let $(b,a)$ be a pair and let $\varphi=b/a$. Then the domain of
$T_{\overline{\varphi}}$ is $\mathcal{H}(b)$ and for
$f\in\mathcal{H}(b)$,
$$\|f\|_{b}^2=\|f\|_{2}^2+\|T_{\overline{\varphi}}f\|_{2}^2.$$
\end{thm}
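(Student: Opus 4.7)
The plan is to equip the domain $\mathcal{D}(T_{\overline{\varphi}})$ with the graph inner product
$$\langle f,g\rangle_G := \langle f,g\rangle_2 + \langle T_{\overline{\varphi}}f, T_{\overline{\varphi}}g\rangle_2$$
and show that the resulting Hilbert space is a reproducing kernel Hilbert space whose kernel is $k_w^b$. Since a reproducing kernel determines its Hilbert space uniquely, this immediately forces $\mathcal{D}(T_{\overline{\varphi}}) = \mathcal{H}(b)$ isometrically, which is exactly the statement of the theorem.

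The first preparatory step is a concrete description of the adjoint. Because $\mathcal{D}(T_{\varphi}) = aH^2$ with $T_{\varphi}(ah) = bh$, the identity
$$\langle T_{\varphi}(ah), f\rangle_2 = \langle bh, f\rangle_2 = \langle h, T_{\overline{b}} f\rangle_2$$
shows that $f \in \mathcal{D}(T_{\overline{\varphi}})$ if and only if there exists $g \in H^2$ with $T_{\overline{a}} g = T_{\overline{b}} f$, and in that case $g = T_{\overline{\varphi}} f$. Such a $g$ is unique, since outerness of $a$ makes $aH^2$ dense in $H^2$, so $\ker T_{\overline{a}} = (aH^2)^{\perp} = \{0\}$.

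Next, I place each $k_w^b$ in $\mathcal{D}(T_{\overline{\varphi}})$ by explicit computation. Using $T_{\overline{b}} k_w = \overline{b(w)} k_w$ together with $|a|^2 + |b|^2 = 1$ a.e.\ on $\mathbb{T}$,
$$T_{\overline{b}} k_w^b = \overline{b(w)} k_w - \overline{b(w)} P_+(|b|^2 k_w) = \overline{b(w)} P_+(|a|^2 k_w) = T_{\overline{a}}\bigl(\overline{b(w)}\,a k_w\bigr),$$
so the previous step yields $k_w^b \in \mathcal{D}(T_{\overline{\varphi}})$ with $T_{\overline{\varphi}} k_w^b = \overline{b(w)}\,a k_w$.

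Since $T_{\overline{\varphi}}$ is closed (adjoint of a densely defined operator), $(\mathcal{D}(T_{\overline{\varphi}}), \langle \cdot, \cdot\rangle_G)$ is a Hilbert space with $\|f\|_2 \le \|f\|_G$, so each point evaluation is bounded and the space is a reproducing kernel Hilbert space. For $f \in \mathcal{D}(T_{\overline{\varphi}})$, computing the two pieces of $\langle f, k_w^b\rangle_G$ via the previous step gives
$$\langle f, k_w^b\rangle_2 = f(w) - b(w)(T_{\overline{b}} f)(w), \qquad \langle T_{\overline{\varphi}} f, T_{\overline{\varphi}} k_w^b\rangle_2 = b(w)(T_{\overline{a}} T_{\overline{\varphi}} f)(w) = b(w)(T_{\overline{b}} f)(w),$$
which add to $f(w)$; hence $k_w^b$ reproduces at $w$ in the graph Hilbert space. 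Since $\mathcal{H}(b)$ is by construction the unique Hilbert space of holomorphic functions on $\mathbb{D}$ with reproducing kernel $k_w^b$, we conclude that $\mathcal{D}(T_{\overline{\varphi}}) = \mathcal{H}(b)$ with matching norm, which is the theorem. The main obstacle is the algebraic rearrangement in the second step that converts $T_{\overline{b}} k_w^b$ into the image of $T_{\overline{a}}$ by invoking the Pythagorean relation; once that formula for $T_{\overline{\varphi}} k_w^b$ is available, the reproducing kernel computation is routine.
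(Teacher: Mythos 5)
Your argument is correct. Note, however, that the paper does not prove this statement at all: it is quoted verbatim from Sarason's \emph{Unbounded Toeplitz operators} (reference \cite{sarason2}), so there is no in-paper proof to compare against. What you have written is a legitimate self-contained derivation. Your first step recovers Sarason's characterization of the adjoint's domain, namely that $f\in\mathcal{D}(T_{\overline{\varphi}})$ exactly when $T_{\overline{b}}f=T_{\overline{a}}g$ for some (necessarily unique) $g\in H^2$, which is the classical ``$f^{+}$'' description of $\mathcal{H}(b)$ from Sarason's book; the computation $T_{\overline{b}}k_w^b=T_{\overline{a}}(\overline{b(w)}\,ak_w)$ via $|a|^2+|b|^2=1$ is right, as is the closedness of $T_{\overline{\varphi}}=T_{\varphi}^{*}$ (adjoint of a densely defined operator), which makes the graph norm complete, and the two-term reproducing computation summing to $f(w)$. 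The appeal to Moore--Aronszajn uniqueness then legitimately identifies the graph space with $\mathcal{H}(b)$ isometrically, since $\mathcal{H}(b)$ is the RKHS with kernel $k_w^b$. Sarason's own route goes through the prior identity $\|f\|_b^2=\|f\|_2^2+\|f^{+}\|_2^2$ established by operator-range techniques and then identifies $f^{+}=T_{\overline{\varphi}}f$; your RKHS-uniqueness argument bypasses that machinery and proves the norm identity and the set equality in one stroke, at the cost of taking as given that $\mathcal{H}(b)$ is determined by its kernel. Both the logic and the computations check out.
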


The next proposition was also proved in \cite{sarason2}.

\begin{prop}[\cite{sarason2}]\label{propek}
If $\varphi$ is in $\mathcal{N}^{+}$, $\psi$ is in $H^{\infty}$, and $f$ is in $\mathcal{D}(T_{\overline{\varphi}})$, then
$$T_{\overline{\varphi}}T_{\overline{\psi}}f=T_{\overline{\varphi}\overline{\psi}}f=T_{\overline{\psi}}T_{\overline{\varphi}}f.$$
\end{prop}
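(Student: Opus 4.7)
The plan is to verify both equalities by computing all three expressions paired against a dense subspace of $H^2$. Writing $\varphi = b/a$ for the canonical pair, we have $\mathcal{D}(T_{\varphi}) = aH^2$ and $T_{\overline{\varphi}} = T_{\varphi}^{*}$. The key structural observation I will exploit is that multiplication by $\psi \in H^{\infty}$ leaves $aH^2$ invariant: if $g = ah$ with $h \in H^2$, then $\psi g = a(\psi h) \in aH^2$. Moreover $aH^2 \subseteq \mathcal{D}(T_{\varphi \psi})$ since $\varphi\psi \cdot ah = b\psi h \in H^2$, and $aH^2$ is dense in $H^2$ because $a$ is outer.

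Now fix $g \in aH^2$. Shifting adjoints one symbol at a time,
\[
\langle T_{\overline{\psi}} T_{\overline{\varphi}} f, g\rangle = \langle T_{\overline{\varphi}} f, \psi g\rangle = \langle f, \varphi \psi g\rangle,
\]
the second step being valid because $\psi g \in aH^2 = \mathcal{D}(T_{\varphi})$; and, once it is known that $T_{\overline{\psi}} f \in \mathcal{D}(T_{\overline{\varphi}})$,
\[
\langle T_{\overline{\varphi}} T_{\overline{\psi}} f, g\rangle = \langle T_{\overline{\psi}} f, \varphi g\rangle = \langle P_{+}(\overline{\psi} f), \varphi g\rangle = \langle f, \psi \varphi g\rangle,
\]
using that $\varphi g \in H^2$. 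Finally, the adjoint relation for $T_{\varphi\psi}$ on its domain gives $\langle T_{\overline{\varphi\psi}} f, g\rangle = \langle f, \varphi\psi g\rangle$. Agreement on $aH^2$ together with density of $aH^2$ in $H^2$ then identifies the three elements.

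The only real content of the argument is verifying the two domain memberships $T_{\overline{\psi}} f \in \mathcal{D}(T_{\overline{\varphi}})$ and $f \in \mathcal{D}(T_{\overline{\varphi\psi}})$; both reduce to showing that the antilinear form $g \mapsto \langle f, \varphi\psi g\rangle$ is $H^2$-continuous on $aH^2$. Combining $\psi g \in aH^2$ with the hypothesis $f \in \mathcal{D}(T_{\overline{\varphi}})$ yields
\[
|\langle f, \varphi\psi g\rangle| = |\langle T_{\overline{\varphi}} f, \psi g\rangle| \leq \|T_{\overline{\varphi}} f\|_{2} \, \|\psi\|_{\infty} \, \|g\|_{2},
\]
which delivers the required continuity and, through the Riesz representation theorem, both domain memberships. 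Once this bound is in place, the displayed pairings close the proof.
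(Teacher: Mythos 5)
The paper does not actually prove this proposition---it is quoted from Sarason's \emph{Unbounded Toeplitz operators} paper---so there is no internal argument to compare yours against; I can only assess your proof on its own terms. The pairing computations themselves are correct, and the membership $T_{\overline{\psi}}f\in\mathcal{D}(T_{\overline{\varphi}})$ is genuinely delivered by your estimate: the defining property of the adjoint $T_{\varphi}^{*}$ requires boundedness of $g\mapsto\langle T_{\overline{\psi}}f,\varphi g\rangle$ precisely on $\mathcal{D}(T_{\varphi})=aH^{2}$, which is exactly where you prove it.

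The gap is in the other membership, $f\in\mathcal{D}(T_{\overline{\varphi\psi}})=\mathcal{D}\bigl((T_{\varphi\psi})^{*}\bigr)$. By the paper's definition, $\mathcal{D}(T_{\varphi\psi})=\{g\in H^{2}:\varphi\psi g\in H^{2}\}$, which equals $a_{1}H^{2}$ for the canonical representation $\varphi\psi=b_{1}/a_{1}$ and can be \emph{strictly larger} than $aH^{2}$ (take $\varphi=1/(1-z)$, $\psi=1-z$: then $\mathcal{D}(T_{\varphi\psi})=H^{2}\supsetneq aH^{2}$). To place $f$ in the domain of the adjoint you must verify $\langle f,\varphi\psi g\rangle=\langle F,g\rangle$ for \emph{all} $g\in\mathcal{D}(T_{\varphi\psi})$, whereas your bound, which relies on $\psi g\in aH^{2}$, only covers $g\in aH^{2}$; boundedness on a subspace that is dense in $H^{2}$ does not by itself transfer the identity to the rest of the domain, because $T_{\varphi\psi}$ is unbounded. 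The step is repairable: one can show $aH^{2}$ is a core for $T_{\varphi\psi}$. Indeed, from $b_{1}a=b\psi a_{1}$ and the Pythagorean identities one gets $|a/a_{1}|\leq(1+\|\psi\|_{\infty}^{2})^{1/2}$ a.e.\ on $\mathbb{T}$, so $a/a_{1}$ is a bounded outer function; writing $ah'=a_{1}(a/a_{1})h'$ and $\varphi\psi\, ah'=b_{1}(a/a_{1})h'$ and using density of $(a/a_{1})H^{2}$ in $H^{2}$ shows that every $g\in a_{1}H^{2}$ is a graph-norm limit of elements of $aH^{2}$, after which your identity extends by continuity. Without some such argument, the assertion that the Riesz representation theorem ``delivers both domain memberships'' is unjustified for the middle operator.
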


\begin{cor}\label{korek}
Let $\varphi_1,\varphi_2\in\mathcal{N}^{+}$ have canonical
representations $\varphi_{i}=b_{i}/a_{i}$, $i=1,2$. If
$\varphi_2/\varphi_1\in H^{\infty}$, then
$\mathcal{H}(b_1)\subset\mathcal{H}(b_2)$.
\end{cor}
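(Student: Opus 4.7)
The plan is to use Theorem \ref{sarsar} to turn the inclusion of de Branges--Rovnyak spaces into an inclusion of domains of multiplication operators, and then to exploit the multiplicative factorization $\varphi_{2}=\psi\varphi_{1}$, where $\psi:=\varphi_{2}/\varphi_{1}\in H^{\infty}$ by hypothesis, via Proposition \ref{propek}.

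More concretely, by Theorem \ref{sarsar} one has $\mathcal{H}(b_{i})=\mathcal{D}(T_{\overline{\varphi_{i}}})$ for $i=1,2$, so it suffices to prove $\mathcal{D}(T_{\overline{\varphi_{1}}})\subset \mathcal{D}(T_{\overline{\varphi_{2}}})$. Given $f\in\mathcal{D}(T_{\overline{\varphi_{1}}})$, I would apply Proposition \ref{propek} with the roles of $\varphi$ and $\psi$ played by $\varphi_{1}\in\mathcal{N}^{+}$ and $\psi\in H^{\infty}$, respectively. Since $\varphi_{1}\psi=\varphi_{2}$, the proposition yields
$$T_{\overline{\varphi_{2}}}f=T_{\overline{\varphi_{1}}\,\overline{\psi}}f=T_{\overline{\psi}}T_{\overline{\varphi_{1}}}f.$$
The right-hand side lies in $H^{2}$ because $T_{\overline{\psi}}$ is bounded on $H^{2}$ (as $\psi\in H^{\infty}$) and $T_{\overline{\varphi_{1}}}f\in H^{2}$. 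Therefore $f\in\mathcal{D}(T_{\overline{\varphi_{2}}})=\mathcal{H}(b_{2})$, which is exactly the inclusion we want.

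I do not expect any serious obstacle: the corollary is essentially a direct combination of the two results cited immediately above it. The only mild point to check is that Proposition \ref{propek} is being applied in a setting where its hypotheses hold exactly, namely that $\psi\in H^{\infty}$ (not merely $\psi\in\mathcal{N}^{+}$), which is guaranteed by the assumption that $\varphi_{2}/\varphi_{1}\in H^{\infty}$. The degenerate case $\varphi_{1}\equiv 0$ corresponds to $b_{1}\equiv 0$ and $\mathcal{H}(b_{1})=\{0\}$, so the inclusion is trivial there.
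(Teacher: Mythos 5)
Your proof is correct and follows essentially the same route as the paper: both reduce the statement to the domain inclusion $\mathcal{D}(T_{\overline{\varphi}_1})\subset\mathcal{D}(T_{\overline{\varphi}_2})$ via Theorem \ref{sarsar} and then obtain it from Proposition \ref{propek} with $\psi=\varphi_2/\varphi_1\in H^\infty$. Your write-up merely spells out the boundedness of $T_{\overline{\psi}}$, which the paper leaves implicit.
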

\begin{proof}
Put $\psi = \varphi_2/\varphi_1$. It follows from Proposition
\ref{propek} that
$\mathcal{D}(T_{\overline{\varphi}_1})\subset\mathcal{D}(T_{\overline{\varphi}_1\overline{\psi}})$,
and so
$$\mathcal{H}(b_1)=\mathcal{D}(T_{\overline{\varphi}_1})\subset\mathcal{D}(T_{\overline{\varphi}_1\overline{\psi}})=\mathcal{D}(T_{\overline{\varphi}_2})=\mathcal{H}(b_2).$$
\end{proof}

In the proof of our main theorem we will use the following
description of invertible Toeplitz operators with unimodular
symbols.

\begin{thDWT}[\cite{nik}, p. 250]\label{animal}
Let $\psi\in L^{\infty}(\partial\mathbb{D})$ be such that $|\psi|=1$
a.e. on $\partial\mathbb{D}$. The following are equivalent.
\begin{itemize}
\item[(a)] $T_{\psi}$ is invertible.
\item[(b)] $\mathrm{dist}(\psi,H^{\infty})<1$ and $\mathrm{dist}(\overline{\psi},H^{\infty})<1$.
\item[(c)] There exists an outer function $h\in H^{\infty}$ such that $\|\psi-h\|_{\infty}<1$.
\item[(d)] There exist real valued bounded functions $u$, $v$ and a constant $c\in\mathbb{R}$ such that $\psi=e^{i(u+\tilde{v}+c)}$ and $\|u\|_{\infty}<\frac{\pi}2$, where $\tilde{v}$ denotes the conjugate function of $v$.
\end{itemize}
\end{thDWT}

We will need also the notion of a rigid function in $H^1$. A
function in $H^1$ is called rigid if no other functions in $H^1$,
except for positive scalar multiples of itself, have the same
argument as it almost everywhere on $\partial \mathbb{ D}$. As
observed in \cite{sarasonk}, every rigid function is outer. It is
known that the function $(1-z)^{\alpha}$ is rigid if $0<\alpha\leq
1$ and is not rigid if $\alpha>1$ (see e.g. \cite[Section 6.8]{fm}).

The next theorem shows a close connection between kernels of
Toeplitz operators and rigid functions in $H^1$ (\cite[p.
70]{sarason}).

\begin{thm}\label{jdr}
If $f$ is an outer function in $H^2$, then $f^2$ is rigid if and
only if the operator $T_{\overline{f}/f}$ has a trivial kernel.
\end{thm}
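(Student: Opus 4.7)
The plan is to dispatch the two directions separately, in both cases translating the kernel condition into an assertion about the argument of $f^2$ on $\mathbb{T}$.

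\textit{Forward direction (rigidity $\Rightarrow$ trivial kernel).} I would pick any $h\in\ker T_{\overline{f}/f}$. By definition of the kernel, there is $k\in H^2$ with $(\overline{f}/f)h=\overline{zk}$ a.e.\ on $\mathbb{T}$. Multiplying by $zk$ yields $(\overline{f}/f)zhk=|k|^2$ (since $|z|=1$), and then
\[
zhk=f^2\cdot\frac{|k|^2}{|f|^2}\quad\text{a.e. on }\mathbb{T}.
\]
Hence $zhk\in H^1$, it vanishes at $0$, and its ratio with $f^2$ is nonnegative, so it has the same argument as $f^2$ almost everywhere. If $h\neq 0$ then also $k\neq 0$ (because $f\neq 0$ a.e.\ and $|f|^2h=\overline{zk}f^2$), and $zhk$ is a nonzero $H^1$ function with the argument of $f^2$ which fails to be a positive multiple of $f^2$ since $f^2(0)\neq 0$. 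Rigidity would then be contradicted, forcing $h=0$.

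\textit{Reverse direction (non-rigidity $\Rightarrow$ nontrivial kernel).} Working contrapositively, assume $f^2$ is not rigid and choose $F\in H^1$ with $F/f^2\ge 0$ a.e.\ and $F$ not a positive scalar multiple of $f^2$. I would factor $F=Ig^2$ with $I$ inner and $g\in H^2$ outer, normalized by $g(0)>0$; one may also assume $f(0)>0$, since multiplying $f$ by a unimodular constant affects neither $\ker T_{\overline{f}/f}$ nor the rigidity of $f^2$. Equating the two representations of $F/|F|$ on $\mathbb{T}$, namely $f/\overline{f}$ and $I\cdot g/\overline{g}$, produces the key identity
\[
(\overline{f}/f)(gI)=\overline{g}\quad\text{a.e. on }\mathbb{T}.
\]
Applying $P_+$ one reads off $T_{\overline{f}/f}(gI)=\overline{g(0)}=g(0)$, and analogously $T_{\overline{f}/f}(f)=f(0)$. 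The linear combination
\[
h:=g(0)\,f-f(0)\,gI
\]
therefore lies in $\ker T_{\overline{f}/f}$. Were $h=0$, then $gI=(g(0)/f(0))f$; uniqueness of the inner-outer factorization would force $I\equiv 1$ and $g=(g(0)/f(0))f$, whence $F=(g(0)/f(0))^2 f^2$ is a positive multiple of $f^2$, contradicting the choice of $F$. Hence $h\neq 0$ and the kernel is nontrivial.

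The hard part will be the reverse direction, specifically the identity $(\overline{f}/f)(gI)=\overline{g}$: it is what reduces the a priori opaque Toeplitz kernel problem to a transparent Fourier computation. Once that identity is in place, producing the kernel element via a linear combination with $f$ and ruling out its vanishing through uniqueness of the inner-outer factorization are essentially bookkeeping.
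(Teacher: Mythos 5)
The paper gives no proof of this theorem---it is quoted from Sarason's book \cite{sarason} (p.~70)---so there is no internal argument to compare against; your proof is correct and is essentially the classical one from that source. Both directions are complete: in the forward direction you correctly observe that $zhk$ is a nonzero $H^1$ function with the argument of $f^2$ that vanishes at the origin while $f(0)\neq 0$ because $f$ is outer (and $k\neq 0$ whenever $h\neq 0$ since $|h|=|k|$ a.e.), and in the reverse direction the identity $(\overline{f}/f)gI=\overline{g}$, the resulting kernel element $g(0)f-f(0)gI$, and the use of uniqueness of the inner--outer factorization to rule out its vanishing are all sound.
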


Moreover, for a pair $(b,a)$ the following  sufficient condition for
density of $\mathcal{M}(a)$ in $\mathcal{H}(b)$  is known (\cite[p.
72]{sarason}, \cite[vol. 2, p. 496]{fm}).
\begin{thm}\label{rigid} If the function $a^2$ is rigid,  then $\mathcal{M}(a)$ is dense in
$\mathcal{H}(b)$.
\end{thm}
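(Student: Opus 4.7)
The plan is to show that the orthogonal complement of $\mathcal{M}(a)$ in $\mathcal{H}(b)$ is trivial. I will take $g\in\mathcal{H}(b)$ with $\langle ah,g\rangle_b=0$ for every $h\in H^2$ and argue that $g=0$.

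By Theorem \ref{sarsar}, the orthogonality reads
\begin{equation*}
\langle ah,g\rangle_2+\langle T_{\overline{\varphi}}(ah),T_{\overline{\varphi}}g\rangle_2=0.
\end{equation*}
To handle the second summand I will compute $T_{\overline{\varphi}}(ah)$ using that, on $\mathbb{T}$, $\overline{\varphi}\cdot a=\overline{b}\cdot (a/\overline{a})$ with the unimodular factor $u:=a/\overline{a}$. The standard product rule for Toeplitz operators (valid because $\overline{b}\in\overline{H^{\infty}}$) yields $T_{\overline{\varphi}}(ah)=T_{\overline{b}}T_uh$. Shifting operators via their adjoints, the orthogonality condition rewrites as $T_{\overline{a}}g+T_{\overline{u}}(b\,T_{\overline{\varphi}}g)=0$ in $H^2$.

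The key algebraic step is to notice that $T_{\overline{a}}=T_{\overline{u}}T_a$ (a one-line check from $\overline{u}\cdot a=\overline{a}$), so the previous equation factors as
\begin{equation*}
T_{\overline{a}/a}\bigl(ag+b\,T_{\overline{\varphi}}g\bigr)=0.
\end{equation*}
Now rigidity of $a^2$ enters via Theorem \ref{jdr}: $\ker T_{\overline{a}/a}=\{0\}$, and hence $ag+b\,T_{\overline{\varphi}}g=0$.

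To finish, I use that $a$ is outer, hence nonvanishing on $\mathbb{D}$. Dividing gives $g=-\varphi\,T_{\overline{\varphi}}g$ on $\mathbb{D}$; since $g\in H^2$ this forces $T_{\overline{\varphi}}g\in\mathcal{D}(T_{\varphi})=aH^2$, so I may write $T_{\overline{\varphi}}g=an$ with $n\in H^2$ and consequently $g=-bn$. Applying Proposition \ref{propek} with $\psi=a$ gives $T_{\overline{a}}T_{\overline{\varphi}}g=T_{\overline{b}}g$; after substitution this reads $T_{\overline{a}}(an)+T_{\overline{b}}(bn)=P_+\bigl((|a|^2+|b|^2)n\bigr)=0$. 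The Pythagorean identity $|a|^2+|b|^2=1$ collapses this to $P_+(n)=n=0$, so $g=0$. The main obstacle is the Toeplitz-algebra computation of the second paragraph, because it is precisely the rearrangement that makes the operator $T_{\overline{a}/a}$ (and thus the rigidity hypothesis) available; once it is carried out, the Pythagorean identity supplies the clean cancellation that closes the argument.
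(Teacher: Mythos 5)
Your proof is correct. The paper itself offers no proof of this theorem --- it is quoted from Sarason's book and from Fricain--Mashreghi --- so there is nothing to compare step by step; your argument is in fact the classical one from those sources: polarize the norm identity of Theorem \ref{sarsar}, convert orthogonality of $g$ to $\mathcal{M}(a)$ into the statement that $ag+b\,T_{\overline{\varphi}}g$ lies in $\ker T_{\overline{a}/a}$, and let rigidity of $a^2$ kill that kernel via Theorem \ref{jdr}, after which the Pythagorean identity forces $g=0$. The one step you assert without justification is $T_{\overline{\varphi}}(ah)=P_+\left(\overline{\varphi}\,ah\right)=T_{\overline{b}}T_{a/\overline{a}}h$ for the \emph{unbounded} operator $T_{\overline{\varphi}}=T_{\varphi}^{*}$; it is true and easy to verify against the defining adjoint relation $\langle T_{\varphi}(ak),ah\rangle_2=\langle ak,T_{\overline{\varphi}}(ah)\rangle_2$ together with the injectivity of $T_{\overline{a}}$ (which holds because $a$ is outer), but it deserves an explicit line since Proposition \ref{propek} only covers co-analytic bounded factors.
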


\section{The spaces $\mathcal{H}(b_{\alpha})$, $\alpha>0$}
Recall that for  $\alpha>0$ we define the pair
$(b_{\alpha},a_{\alpha})$ by
$$\varphi_{\alpha}(z)=\frac{b_{\alpha}(z)}{a_{\alpha}(z)}=\frac1{(1-z)^{\alpha}}.$$

Consequently, the outer function $a_{\alpha}$ is given by
\begin{equation}\label{a}
a_{\alpha}(z)=\exp{\left\{\frac1{4\pi}\int_{0}^{2\pi}\frac{e^{it}+z}{e^{it}-z}\log{\frac{|1-e^{it}|^{2\alpha}}{1+|1-e^{it}|^{2\alpha}}}dt\right\}}.
\end{equation}
Since both $a_{\alpha}$ and $(1-z)^{\alpha}$ are outer functions,
the equality $(1-z)^{\alpha}b_{\alpha}(z)=a_{\alpha}(z)$ implies
that $b_{\alpha}$ is also outer. Hence
\begin{equation}\label{b}
b_{\alpha}(z)=a_{\alpha}(z)\varphi_{\alpha}(z)=\exp{\left\{\frac1{4\pi}\int_{0}^{2\pi}\frac{e^{it}+z}{e^{it}-z}\log{\frac{1}{1+|1-e^{it}|^{2\alpha}}}dt\right\}}.
\end{equation}

This formula shows that $\log{|b_{\alpha}(z)|}$ is a function
harmonic in ${\mathbb{D}}$ and continuous in
$\overline{\mathbb{D}}$. Moreover, $|b_{\alpha}(1)|=1$. We now prove
that actually $b_{\alpha}(1)=1$. To this end, it is enough to note
that $\mathrm{arg}b_{\alpha}(r)=0$ for all $0<r<1$. Indeed,
\begin{displaymath}
\begin{split}
\mathrm{arg}b_{\alpha}(r)&=\frac1{4\pi}\int_{0}^{2\pi}\mathrm{Im}\left(\frac{e^{it}+r}{e^{it}-r}\right)\log{\frac{1}{1+|1-e^{it}|^{2\alpha}}}dt\\
&=-\frac1{4\pi}\int_{-\pi}^{\pi}\frac{2r\sin t
}{|e^{it}-r|^2}\log{\frac{1}{1+|1-e^{it}|^{2\alpha}}}dt=0,
\end{split}
\end{displaymath}
because the integrand is an odd function.

The following proposition says for which $\alpha$ a nontangential
limit at 1 of each function (and its derivatives up to a given
order) from $\mathcal{H}(b_{\alpha})$ exists.

\begin{prop}\label{deri}
Let $n\in\mathbb{N}$. Every $f\in\mathcal{H}(b_{\alpha})$ along with
its derivatives up to order $n-1$ has a nontangential limit at the
point $1$ if and only if $\alpha>n-1/2$.
\end{prop}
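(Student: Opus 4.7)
The plan is to exploit the corona-pair description of $\mathcal{H}(b_{\alpha})$. From (\ref{b}), the boundary values $|b_{\alpha}(e^{it})|^{2}=1/(1+|1-e^{it}|^{2\alpha})\ge 1/(1+2^{2\alpha})$ are bounded below, and since $\log|b_{\alpha}|$ is the Poisson integral of its bounded boundary data, $|b_{\alpha}|$ itself is bounded away from $0$ on $\overline{\mathbb{D}}$. Hence $(b_{\alpha},a_{\alpha})$ is a corona pair and $\mathcal{H}(b_{\alpha})=\mathcal{M}(\overline{a_{\alpha}})$. Every $f\in\mathcal{H}(b_{\alpha})$ thus takes the form $f=T_{\overline{a_{\alpha}}}g$ for some $g\in H^{2}$, i.e.
$$f(z)=\int_{\mathbb{T}}\frac{\overline{a_{\alpha}(w)}\,g(w)}{1-\bar w z}\,dm(w),\qquad z\in\mathbb{D},$$
which is directly amenable to derivative estimation.

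For sufficiency ($\alpha>n-1/2$) I would differentiate under the integral:
$$f^{(j)}(z)=j!\int_{\mathbb{T}}\frac{\overline{a_{\alpha}(w)}\,g(w)\,\bar w^{\,j}}{(1-\bar w z)^{j+1}}\,dm(w),\qquad 0\le j\le n-1.$$
Using $a_{\alpha}=(1-z)^{\alpha}b_{\alpha}$ and $\|b_{\alpha}\|_{\infty}\le 1$ gives the boundary bound $|a_{\alpha}(w)|\le|1-w|^{\alpha}$, while for $z$ in a nontangential cone at $1$ the standard estimate $|1-\bar w z|\ge c(|1-w|+(1-|z|))$ holds. Hence the integrand is majorized uniformly in $z$ by $C|1-w|^{\alpha-j-1}|g(w)|$, whose $L^{1}(\mathbb{T})$-norm is controlled via Cauchy--Schwarz by $\|g\|_{2}\bigl(\int_{\mathbb{T}}|1-w|^{2(\alpha-j-1)}\,dm(w)\bigr)^{1/2}$, finite exactly when $\alpha>j+1/2$. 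Under the assumption $\alpha>n-1/2$ this holds for every $j\in\{0,\ldots,n-1\}$, and Lebesgue dominated convergence delivers the nontangential limits.

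For necessity ($\alpha\le n-1/2$) I would exhibit $f\in\mathcal{H}(b_{\alpha})$ whose $(n-1)$-st derivative has no nontangential limit at $1$. A natural candidate is $f=T_{\overline{a_{\alpha}}}g$ with $g(w)=(1-w)^{-\beta}$ for $\beta\in(\alpha-n+1,\,1/2)$, so that $g\in H^{2}$ and the above integrability bound is exactly violated. The main obstacle is that non-integrability of the modulus of the integrand does not by itself rule out conditional convergence via phase cancellation, so the leading singular asymptotic of $f^{(n-1)}(r)$ must be isolated carefully. A cleaner alternative is to couple the expansion
$$b_{\alpha}(z)=1-\frac{1}{2\cos\alpha\pi}(1-z)^{2\alpha}+O((1-z)^{4\alpha})$$
near $z=1$ (derivable from (\ref{b}) by matching real parts on the boundary via a Hilbert-transform calculation on the log-modulus) with an Ahern--Clark--Sarason-style reproducing-kernel argument, showing that the $(n-1)$-st derivative reproducing kernel $k_{1}^{b_{\alpha},(n-1)}$ fails to lie in $\mathcal{H}(b_{\alpha})$ precisely at the threshold $\alpha=n-1/2$ (with the critical values $\alpha\in\{1/2,3/2,\ldots\}$ requiring a separate logarithmic-correction analysis).
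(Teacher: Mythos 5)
Your sufficiency argument is sound and is genuinely different from the paper's: writing $f=T_{\overline{a_{\alpha}}}g$ via the corona-pair identification $\mathcal{H}(b_{\alpha})=\mathcal{M}(\overline{a_{\alpha}})$, differentiating the Cauchy integral, and dominating the integrand on a Stolz angle by $C|1-w|^{\alpha-j-1}|g(w)|$ does give the nontangential limits of $f,\dots,f^{(n-1)}$ when $\alpha>n-1/2$, by dominated convergence. The paper instead quotes the Ahern--Clark-type criterion of Fricain--Mashreghi (Theorem 3.2 of \cite{fric}), which says the boundary regularity in question holds \emph{if and only if} $\int_0^{2\pi}|\log|b_{\alpha}(e^{it})||\,|1-e^{it}|^{-2n}\,dt<\infty$, and then simply computes $\log|b_{\alpha}(e^{it})|\asymp|1-e^{it}|^{2\alpha}$ near $t=0$, so that both implications reduce to the elementary integrability condition $2n-2\alpha<1$.

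The genuine gap is in your necessity direction, which is where the actual content lies and which your method cannot close as written. You propose $g(w)=(1-w)^{-\beta}$ with $\beta\in(\alpha-n+1,1/2)$ and correctly observe yourself that divergence of the integral of the \emph{modulus} does not show that $f^{(n-1)}(r)$ diverges: phase cancellation could still produce a finite limit, so you would need to extract the leading singular asymptotics of $T_{\overline{a_{\alpha}}}g$ near $1$, and this is never done. Moreover your chosen interval for $\beta$ is empty at the critical value $\alpha=n-1/2$, which the proposition also requires to fail; you defer this to a ``separate logarithmic-correction analysis'' that is not supplied. The fallback you sketch --- an expansion $b_{\alpha}(z)=1-\tfrac{1}{2\cos\alpha\pi}(1-z)^{2\alpha}+O((1-z)^{4\alpha})$ combined with showing that the derivative reproducing kernel fails to lie in $\mathcal{H}(b_{\alpha})$ --- is essentially a re-derivation of the Ahern--Clark/Fricain--Mashreghi criterion in this special case, but it too is only asserted (and the stated constant again degenerates exactly at the half-integer values of $\alpha$ you need). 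As it stands, the ``only if'' half of the proposition is not proved; either carry out the asymptotic analysis of $f^{(n-1)}$ for your explicit $g$ (including the endpoint case), or invoke the two-sided criterion from \cite{fric} as the paper does.
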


This is a consequence of Theorem 3.2 from \cite{fric} (see also
\cite{sarason} and \cite{ross}), which states that the following two
conditions are equivalent:
\begin{itemize}
\item[(i)] for every $f\in\mathcal{H}(b_{\alpha})$ the functions $f(z), f'(z),\ldots , f^{(n-1)}(z)$ have finite limits as $z$
tends nontangentially to $1$;
\item[(ii)]
$$\int_{0}^{2\pi}\frac{|\log|b_{\alpha}(e^{it})||}{|1-e^{it}|^{2n}}dt<+\infty.$$
\end{itemize}

Since

\begin{displaymath}
\log{|b_{\alpha}(e^{it})|^2}=\log{\frac{1}{1+|1-e^{it}|^{2\alpha}}}=\log{\left(1-\frac{|1-e^{it}|^{2\alpha}}{1+|1-e^{it}|^{2\alpha}}\right)}
\end{displaymath}
and $|\log{(1-x)}|\approx |x|$ for $x$ sufficiently close to zero,
we have
\begin{displaymath}
\log{|b_{\alpha}(e^{it})|}\approx\frac{|1-e^{it}|^{2\alpha}}{1+|1-e^{it}|^{2\alpha}}\approx
|1-e^{it}|^{2\alpha}
\end{displaymath}
whenever $t$ is sufficiently close to $0$ or $2\pi$. This implies
that
\begin{displaymath}
\int_0^{2\pi}\frac{|\log{|b_{\alpha}(e^{it})|}|}{|1-e^{it}|^{2n}}dt<\infty
\end{displaymath}
if and only if
\begin{displaymath}
\int_0^{2\pi}\frac{1}{|1-e^{it}|^{2n-2\alpha}}dt<\infty,
\end{displaymath}
which holds only when $\alpha>n-1/2$.

In particular, we see that every $f\in\mathcal{H}(b_{\alpha})$ has a
nontangential limit at $1$ if and only if $\alpha>1/2$.

The next proposition is an immediate consequence of Corollary
\ref{korek}.
\begin{prop}
For every $0<\alpha\leq\beta<\infty$,
$$\mathcal{H}(b_{\beta})\subset \mathcal{H}(b_{\alpha}).$$
\end{prop}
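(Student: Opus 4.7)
The plan is to apply Corollary \ref{korek} directly. I would set $\varphi_1=\varphi_{\beta}=1/(1-z)^{\beta}$ and $\varphi_2=\varphi_{\alpha}=1/(1-z)^{\alpha}$, whose canonical representations are exactly $\varphi_{\beta}=b_{\beta}/a_{\beta}$ and $\varphi_{\alpha}=b_{\alpha}/a_{\alpha}$ as constructed in the previous section.

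The single hypothesis that needs to be checked is that the quotient
$$\frac{\varphi_2}{\varphi_1}=\frac{\varphi_{\alpha}}{\varphi_{\beta}}=(1-z)^{\beta-\alpha}$$
belongs to $H^{\infty}$. Since the assumption $\beta\geq\alpha$ gives a nonnegative exponent $\beta-\alpha\geq 0$, and since $|1-z|\leq 2$ on $\mathbb{D}$, the principal branch of $(1-z)^{\beta-\alpha}$ is holomorphic on $\mathbb{D}$ and bounded by $2^{\beta-\alpha}$, so it indeed lies in $H^{\infty}$.

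With this verification, Corollary \ref{korek} applied with the labelling above yields $\mathcal{H}(b_1)\subset\mathcal{H}(b_2)$, which is precisely $\mathcal{H}(b_{\beta})\subset\mathcal{H}(b_{\alpha})$.

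There is no real obstacle here: the entire content of the proposition is packaged in the fact that $(1-z)^{\beta-\alpha}\in H^{\infty}$ for $\beta\geq\alpha$. The work was done earlier in establishing Proposition \ref{propek} and Corollary \ref{korek}; this proposition is essentially a one-line specialization of that machinery to the family $\varphi_{\alpha}=(1-z)^{-\alpha}$.
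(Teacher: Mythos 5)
Your proof is correct and matches the paper's intent exactly: the paper states this proposition as an immediate consequence of Corollary \ref{korek}, and your verification that $(1-z)^{\beta-\alpha}\in H^{\infty}$ for $\beta\geq\alpha$ is precisely the one-line check being left implicit. Nothing is missing.
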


Finally, we observe that
$$|b_{\alpha}(z)|\geq \sqrt{\frac1{1+4^{\alpha}}},$$
which implies that $(b_{\alpha},a_{\alpha})$ is a corona pair for
$\alpha>0$.

\begin{cor}\label{opiec}
For $\alpha>0$,
$$\mathcal{M}({a}_{\alpha})=\mathcal{M}((1-z)^{\alpha})\quad\mathrm{and}\quad\mathcal{H}(b_{\alpha})=\mathcal{M}(\overline{a}_{\alpha})=\mathcal{M}(\overline{(1-z)^{\alpha}})$$
with equivalence of norms.
\end{cor}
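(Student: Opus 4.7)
The plan is to extract from the bound $|b_\alpha|\ge (1+4^\alpha)^{-1/2}$ just established, together with $b_\alpha\in H^\infty$, the observation that $b_\alpha$ is bounded away from $0$ in $\mathbb{D}$, so that $1/b_\alpha$ also belongs to $H^\infty$. All three equalities will be deduced from this fact and the factorization $a_\alpha=b_\alpha(1-z)^\alpha$.

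The middle equality $\mathcal{H}(b_\alpha)=\mathcal{M}(\overline{a}_\alpha)$ is then immediate: the very definition of a corona pair is satisfied here (as we have just observed), so the result quoted from \cite[p.~62]{sarason} in the introduction applies directly.

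For the equality $\mathcal{M}(a_\alpha)=\mathcal{M}((1-z)^\alpha)$, I would factor $T_{a_\alpha}=T_{b_\alpha}T_{(1-z)^\alpha}$ and, symmetrically, $T_{(1-z)^\alpha}=T_{1/b_\alpha}T_{a_\alpha}$; both identities are trivial since all symbols involved are analytic and bounded. Because $T_{b_\alpha}$ and $T_{1/b_\alpha}$ are then mutually inverse bounded operators on $H^2$, the two ranges coincide as subspaces of $H^2$ with equivalent range norms, the equivalence constants being $\|b_\alpha\|_\infty$ and $\|1/b_\alpha\|_\infty$.

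The final equality $\mathcal{M}(\overline{a}_\alpha)=\mathcal{M}(\overline{(1-z)^\alpha})$ is handled in exactly the same way, using the product rule $T_{\overline{\phi}}T_{\overline{\psi}}=T_{\overline{\phi\psi}}$ valid for $\phi,\psi\in H^\infty$: one writes $T_{\overline{a}_\alpha}=T_{\overline{(1-z)^\alpha}}T_{\overline{b}_\alpha}$ and $T_{\overline{(1-z)^\alpha}}=T_{\overline{a}_\alpha}T_{\overline{1/b_\alpha}}$, and then $T_{\overline{b}_\alpha}$ and $T_{\overline{1/b_\alpha}}$ are again mutually inverse bounded operators on $H^2$. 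There is no genuine obstacle; the entire corollary reduces to the $H^\infty$-invertibility of $b_\alpha$, which was precisely the content of the corona-pair bound.
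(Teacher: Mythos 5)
Your argument is correct and follows essentially the same route as the paper: derive $1/b_\alpha\in H^\infty$ from the lower bound on $|b_\alpha|$, get $\mathcal{H}(b_\alpha)=\mathcal{M}(\overline{a}_\alpha)$ from the corona-pair result, and obtain the remaining equalities of ranges from the invertibility of $T_{b_\alpha}$ and $T_{\overline{b}_\alpha}$ applied to the factorizations $a_\alpha=b_\alpha(1-z)^\alpha$ and $\overline{a}_\alpha=\overline{b}_\alpha\,\overline{(1-z)^\alpha}$. The only cosmetic difference is that you supply explicit equivalence constants for the range norms, where the paper invokes the Closed Graph Theorem; both are fine.
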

\begin{proof}
The equality of $\mathcal{H}(b_{\alpha})$ and
$\mathcal{M}(\overline{a}_{\alpha})$ follows from the fact that
$(b_{\alpha},a_{\alpha})$ is a corona pair, which in turn is a
consequence of the fact that $b_{\alpha}$ is bounded below. The
latter implies that $1/b_{\alpha}\in H^{\infty}$ and so
$T_{b_{\alpha}}$ and $T_{\overline{b}_{\alpha}}$ are invertible.
Hence
$$\mathcal{M}((1-z)^{\alpha})=T_{\frac{{a}_{\alpha}}{{b}_{\alpha}}}H^2=T_{{a}_{\alpha}}H^2$$
and
$$\mathcal{M}(\overline{(1-z)^{\alpha}})=T_{\frac{\overline{a}_{\alpha}}{\overline{b}_{\alpha}}}H^2=T_{\overline{a}_{\alpha}}H^2.$$
Both $\mathcal{M}({a}_{\alpha})$ and $\mathcal{M}((1-z)^{\alpha})$
are boundedly contained in $H^2$. Hence, the Closed Graph Theorem
implies equivalence of their norms. Similarly, one obtains the
equivalence of norms in $\mathcal{M}(\overline{a}_{\alpha})$ and
$\mathcal{M}(\overline{(1-z)^{\alpha}})$.
\end{proof}

\section{Main results}
%%%%%%%%%%%%%%%%%%%%
We start with the following.

\begin{thm}\label{lemBB}
For any $n\in\mathbb{N}$ and $n-1/2<\alpha<n+1/2$ we have
\begin{equation*}
\mathcal{M}(\overline{(1-z)^{\alpha}})=\mathcal{M}((1-z)^{\alpha})+\mathrm{span}\{S^{*}(1-z)^{\alpha},\ldots,
S^{*n}(1-z)^{\alpha}\}.
\end{equation*}
\end{thm}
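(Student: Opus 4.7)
The plan is to prove the two inclusions separately. The inclusion $\supseteq$ is essentially immediate: by Corollary~\ref{opiec}, $\mathcal{M}((1-z)^{\alpha}) = \mathcal{M}(a_{\alpha})$ is contractively contained in $\mathcal{H}(b_{\alpha}) = \mathcal{M}(\overline{(1-z)^{\alpha}})$. Moreover $(1-z)^{\alpha} = T_{(1-z)^{\alpha}}(1) \in \mathcal{M}((1-z)^{\alpha}) \subseteq \mathcal{H}(b_{\alpha})$, and since $\mathcal{H}(b_{\alpha})$ is $S^{*}$-invariant (a standard property of de Branges--Rovnyak spaces), all $S^{*k}(1-z)^{\alpha}$ likewise lie in $\mathcal{H}(b_{\alpha}) = \mathcal{M}(\overline{(1-z)^{\alpha}})$.

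For the reverse inclusion, the main tool is Proposition~\ref{deri}: since $\alpha > n - \tfrac12$, every $f \in \mathcal{H}(b_{\alpha})$ has well-defined nontangential limits $\ell_{j} := f^{(j)}(1)$ for $j = 0, \ldots, n-1$. I would introduce the bounded linear evaluation $E \colon \mathcal{H}(b_{\alpha}) \to \mathbb{C}^{n}$, $E(f) := (\ell_{0}, \ldots, \ell_{n-1})$, and the finite-dimensional subspace $V := \mathrm{span}\{S^{*k}(1-z)^{\alpha} : 1 \leq k \leq n\}$. The strategy is to establish (a) $E|_{V}$ is a linear isomorphism onto $\mathbb{C}^{n}$, and (b) $\ker E \subseteq \mathcal{M}((1-z)^{\alpha})$. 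Granting both, for any $f \in \mathcal{M}(\overline{(1-z)^{\alpha}})$ one picks the unique $h \in V$ with $E(h) = E(f)$; then $f - h \in \ker E \subseteq \mathcal{M}((1-z)^{\alpha})$, yielding the decomposition $f = (f - h) + h$.

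For (a), set $r_{k} := S^{*k}(1-z)^{\alpha}$ and differentiate $z\, r_{k}(z) = r_{k-1}(z) - c_{k-1}$ (where $c_{k-1}$ is the $(k{-}1)$-st Taylor coefficient of $(1-z)^{\alpha}$) $j$ times at $z = 1$ to obtain the two-term recursion $r_{k}^{(j)}(1) = r_{k-1}^{(j)}(1) - j\, r_{k}^{(j-1)}(1)$ for $j \geq 1$, initialized by $r_{0}^{(j)}(1) = 0$ for $j = 0, \ldots, n-1$ (valid because $\alpha - j > \tfrac12$ throughout our range). Solving this recursion exhibits the $n \times n$ matrix $M := [r_{k}^{(j)}(1)]_{0 \leq j \leq n-1,\, 1 \leq k \leq n}$ as $-W\tilde C$, where $\tilde C$ is upper-triangular with diagonal entries $c_{0} = 1$, and $W$ factors as $\mathrm{diag}((-1)^{j} j!)$ times the Pascal-type matrix $\bigl[\binom{j+i}{i}\bigr]_{j,i=0}^{n-1}$ (by induction one checks $(A^{m}e_{0})_{j} = (-1)^{j} j! \binom{j+m-1}{m-1}$, where $A$ inverts the lower-triangular matrix implementing the recursion). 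Since $\bigl[\binom{j+i}{i}\bigr] = PP^{T}$ with $P$ the unit lower-triangular Pascal matrix (Vandermonde), its determinant is $1$, and hence $|\det M| = \prod_{j=0}^{n-1} j! \neq 0$.

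The main obstacle is step (b). Concretely, one must show: if $f \in \mathcal{H}(b_{\alpha})$ satisfies $f^{(j)}(1) = 0$ for $j = 0, \ldots, n-1$, then $f/(1-z)^{\alpha} \in H^{2}$, or equivalently $\int_{0}^{2\pi} |f(e^{it})|^{2}/|1-e^{it}|^{2\alpha}\, dt < \infty$. The heuristic is that the vanishing of the first $n$ nontangential derivatives should force the circle-boundary estimate $|f(e^{it})| = O(|t|^{n})$ near $t = 0$, and since $\alpha < n + \tfrac12$ this would yield the needed integrability. Promoting the nontangential vanishing provided by Proposition~\ref{deri} into such a circle-boundary estimate is the delicate analytic point, and I would expect to have to invoke the finer boundary-regularity machinery of \cite{fric} underlying Proposition~\ref{deri} to carry it through.
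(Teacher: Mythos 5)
Your reduction of the hard inclusion to the two claims (a) and (b) is structurally coherent, and the easy inclusion $\supseteq$ via Corollary \ref{opiec} and $S^{*}$-invariance is fine. But the proof has a genuine gap: claim (b), which you yourself identify as the main obstacle, is precisely the hard content of the theorem and is left unproved. Worse, the intermediate estimate you propose in order to reach it --- that vanishing of the nontangential limits $f(1)=\dots=f^{(n-1)}(1)=0$ forces $|f(e^{it})|=O(|t|^{n})$ near $t=0$ --- is false. Take $f=(1-z)^{\alpha}$ itself with $n-1/2<\alpha<n$: all the relevant nontangential limits at $1$ vanish (since $\alpha>j+1/2$ for $j\leq n-1$), yet $|f(e^{it})|=|1-e^{it}|^{\alpha}\asymp|t|^{\alpha}$, which is not $O(|t|^{n})$; and for a general element $(1-z)^{\alpha}h$ with $h\in H^{2}$ the boundary values admit no pointwise control near $t=0$ at all. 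What you actually need is only the integrability $\int_{0}^{2\pi}|f(e^{it})|^{2}|1-e^{it}|^{-2\alpha}\,dt<\infty$ (which, together with $f/(1-z)^{\alpha}\in\mathcal{N}^{+}$, does give $f/(1-z)^{\alpha}\in H^{2}$), but extracting that from the mere existence and vanishing of finitely many nontangential limits at the single point $1$ is not something Proposition \ref{deri} or the boundary-regularity results of \cite{fric} deliver; those results go in the opposite direction. As it stands, the only way I see to justify (b) is to first prove the decomposition asserted by the theorem, which makes the argument circular.

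For comparison, the paper's proof never touches boundary values of individual functions. It works with the unimodular symbol $Q(z)=(1-z)/\overline{(1-z)}$, for which $Q(e^{it})=e^{i(t-\pi)}$, so that $T_{Q^{n}}=(-1)^{n}S^{n}$ and $T_{(1-z)^{\alpha}}=(-1)^{n}T_{\overline{(1-z)^{\alpha}}}T_{Q^{\alpha-n}}S^{n}$, where $T_{Q^{\alpha-n}}$ is invertible by the Devinatz--Widom theorem precisely because $|\alpha-n|<1/2$ keeps the argument of $Q^{\alpha-n}$ below $\pi/2$ in modulus. Splitting an arbitrary $g_{0}\in H^{2}$ as $S^{n}S^{*n}g_{0}+\sum_{k=0}^{n-1}\langle g_{0},z^{k}\rangle z^{k}$ then produces both inclusions at once, with $\mathrm{span}\{S^{*}(1-z)^{\alpha},\dots,S^{*n}(1-z)^{\alpha}\}$ appearing as the image of the polynomials of degree less than $n$. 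If you want to keep your evaluation-map framework, you should replace the analytic step (b) by this operator-theoretic factorization; without it the proof is incomplete.
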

\begin{proof}
Let $$Q(z)=\frac{1-z}{\overline{1-z}},\quad z\in\mathbb{D}.$$ Then
$Q$ has a continuous extension to
$\overline{\mathbb{D}}\setminus\{1\}$ and
$$Q(e^{it})=e^{(t-\pi)i},\quad t\in (0,2\pi),$$ which implies that $$T_{Q^n}=(-1)^nS^n\quad \mathrm{for}\ n\geq1.$$

Moreover, we observe that for $n-1/2<\alpha<n+1/2$, $n\geq 1$, we
have
$$T_{Q^{\alpha}}=T_{Q^{\alpha-n}Q^n}=(-1)^nT_{Q^{\alpha-n}}S^n.$$
Consequently,
\begin{equation}\label{krop}
T_{(1-z)^{\alpha}}=T_{\overline{(1-z)^{\alpha}}Q^{\alpha}}=(-1)^nT_{\overline{(1-z)^{\alpha}}}T_{Q^{\alpha-n}}S^n.
\end{equation}

Observe now that the operator $T_{Q^{\alpha-n}}$ is invertible. This
is an immediate consequence of the Devinatz-Widom Theorem.

Let $f\in \mathcal{M}(\overline{(1-z)^{\alpha}})$ and
$f=T_{\overline{(1-z)^{\alpha}}}g$ for a function $g\in H^2$. Since
$T_{Q^{\alpha-n}}$ is invertible, there exists $g_0\in H^2$ such
that $(-1)^ng= T_{Q^{\alpha-n}}g_0$. Hence, using \eqref{krop}, we obtain%From this and
%\eqref{krop},
\begin{displaymath}
\begin{split}
f=T_{\overline{(1-z)^{\alpha}}}g&=(-1)^nT_{\overline{(1-z)^{\alpha}}}T_{Q^{\alpha-n}}g_0\\
&= (-1)^nT_{\overline{(1-z)^{\alpha}}}T_{Q^{\alpha-n}}\left(S^nS^{*n}g_0+\sum_{k=0}^{n-1}\langle g_0,z^k\rangle z^k\right)\\
&= T_{(1-z)^{\alpha}}S^{*n}g_0+(-1)^n\sum_{k=0}^{n-1}\langle
g_0,z^k\rangle T_{\overline{(1-z)^{\alpha}}}T_{Q^{\alpha-n}}z^k.
\end{split}
\end{displaymath}
Since for $0\leq k\leq n-1$,
\begin{equation}\label{krop2}
\begin{split}
(-1)^nT_{\overline{(1-z)^{\alpha}}}T_{Q^{\alpha-n}}z^k&=(-1)^nT_{\overline{Q}^n(1-z)^{\alpha}}S^k1\\&=S^{*(n-k)}T_{(1-z)^{\alpha}}1=S^{*(n-k)}(1-z)^{\alpha},
\end{split}
\end{equation}
we get
\begin{displaymath}
\begin{split}
f=& (1-z)^{\alpha}S^{*n}g_0+\sum_{k=0}^{n-1}\langle g_0,z^k\rangle S^{*(n-k)}(1-z)^{\alpha}\\
&\in
\mathcal{M}((1-z)^{\alpha})+\mathrm{span}\{S^{*}(1-z)^{\alpha},\ldots,
S^{*n}(1-z)^{\alpha}\}.
\end{split}
\end{displaymath}

On the other hand, if
$$f= (1-z)^{\alpha}h+\sum_{k=1}^{n}c_k S^{*k}(1-z)^{\alpha},\quad h\in H^2,$$
then, by \eqref{krop} and \eqref{krop2},
\begin{displaymath}
\begin{split}
f=& T_{(1-z)^{\alpha}}h+\sum_{k=0}^{n-1}c_{n-k} S^{*(n-k)}(1-z)^{\alpha} \\
=&(-1)^nT_{\overline{(1-z)^{\alpha}}}T_{Q^{\alpha-n}}S^nh+(-1)^n\sum_{k=0}^{n-1}c_{n-k} T_{\overline{(1-z)^{\alpha}}}T_{Q^{\alpha-n}}z^k\\
=&T_{\overline{(1-z)^{\alpha}}}\left((-1)^nT_{Q^{\alpha-n}}S^nh+(-1)^n\sum_{k=0}^{n-1}c_{n-k}
T_{Q^{\alpha-n}}z^k\right)\in
\mathcal{M}(\overline{(1-z)^{\alpha}}).
\end{split}
\end{displaymath}

\end{proof}

Now we prove our main result.
\begin{thm}\label{mejn}
Let $0<\alpha<\infty$ and let $(b_{\alpha}, a_{\alpha})$ be a pair,
with the functions $b_{\alpha}$ and $a_{\alpha}$ given by \eqref{b}
and \eqref{a}, respectively. Then
\begin{enumerate}[(i)]
\item for $0<\alpha<1/2$,
$$\mathcal{H}(b_{\alpha})=\mathcal{M}(a_{\alpha})=(1-z)^{\alpha}H^2,$$
\item for $n-1/2<\alpha<n+1/2$, $n=1,2,\ldots$,
$$\mathcal{H}(b_{\alpha})=\mathcal{M}(a_{\alpha})+\mathcal{P}_n=(1-z)^{\alpha}H^2+\mathcal{P}_n,$$
where $\mathcal{P}_n$ is the set of all polynomials of degree at most
$n-1$,
\item
$$\mathcal{H}(b_{1/2})=\overline{\mathcal{M}(a_{1/2})}=\overline{(1-z)^{1/2}H^2},$$
where the closure is taken with respect to the
$\mathcal{H}(b_{1/2})$-norm,
\item for $\alpha=n+1/2$, $n=1,2,\ldots$,
$$\mathcal{H}(b_{\alpha})=\overline{\mathcal{M}(a_{\alpha})}+\mathcal{A}_n,$$
where the closure is taken with respect to the
$\mathcal{H}(b_{\alpha})$-norm and $\mathcal{A}_n$ is the
$n$-dimensional subspace of $\mathcal{H}(b_{\alpha})$ defined by
$$\mathcal{A}_n=\left\{ p_n\cdot
P_+\left(\overline{(1-z)^{\alpha}}{(1-z)}^{1/2}\right)+P_+\left(p_nP_-\left(\overline{(1-z)^{\alpha}}{(1-z)}^{1/2}\right)\right)\
\colon \ p_n\in \mathcal{P}_n \right\},$$ where $P_-=I-P_+$.
\end{enumerate}

\end{thm}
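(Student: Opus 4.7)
Proof plan. For Part (i), the idea is to adapt Theorem \ref{lemBB} to ``$n=0$''. Since $Q^{\alpha}(e^{it})=e^{i\alpha(t-\pi)}$ has $\|\alpha(t-\pi)\|_{\infty}=\pi\alpha<\pi/2$, the Devinatz--Widom Theorem (condition (d)) yields invertibility of $T_{Q^{\alpha}}$. Combined with the identity $T_{(1-z)^{\alpha}}=T_{\overline{(1-z)^{\alpha}}}T_{Q^{\alpha}}$, which is valid by the standard Toeplitz composition rule (since $\overline{(1-z)^{\alpha}}\in\overline{H^{\infty}}$), this gives $\mathcal{M}((1-z)^{\alpha})=\mathcal{M}(\overline{(1-z)^{\alpha}})$, and Corollary \ref{opiec} closes the argument. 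For Part (ii), Theorem \ref{lemBB} and Corollary \ref{opiec} together give $\dim\mathcal{H}(b_{\alpha})/\mathcal{M}((1-z)^{\alpha})\leq n$, so it is enough to verify $\mathcal{P}_{n}\subset\mathcal{H}(b_{\alpha})$ and $\mathcal{P}_{n}\cap\mathcal{M}((1-z)^{\alpha})=\{0\}$. The first follows from Theorem \ref{sarsar}: the expansion $(1-e^{-it})^{-\alpha}=\sum_{k\geq 0}\binom{\alpha+k-1}{k}e^{-ikt}$ has only nonpositive frequencies, so $T_{\overline{\varphi_{\alpha}}}z^{j}$ is a polynomial in $H^{2}$. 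The second follows from a boundary-behaviour argument at $z=1$: if $p\in\mathcal{P}_{n}$ equals $(1-z)^{\alpha}g$ for some $g\in H^{2}$, then integrability of $|p|^{2}/|1-e^{it}|^{2\alpha}$ forces $p$ to vanish at $1$ to order at least $n$, so $p\equiv 0$. For Part (iii), the corona estimate $|b_{\alpha}|\geq(1+4^{\alpha})^{-1/2}$ makes $b_{1/2}$ invertible in $H^{\infty}$, so $a_{1/2}^{2}=b_{1/2}^{2}(1-z)$ differs from the rigid function $1-z$ by an invertible $H^{\infty}$ factor, preserving rigidity. Theorem \ref{rigid} then gives density of $\mathcal{M}(a_{1/2})=(1-z)^{1/2}H^{2}$ in $\mathcal{H}(b_{1/2})$.

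Part (iv) is the main case, and its strategy is to pull the decomposition back along the isometry $T_{\overline{(1-z)^{\alpha}}}\colon H^{2}\to\mathcal{H}(b_{\alpha})$ (an isometry onto the range norm, which is equivalent to $\|\cdot\|_{b_{\alpha}}$ by Corollary \ref{opiec}). Subspace closures in $\mathcal{H}(b_{\alpha})$ correspond under this map to $H^{2}$-closures of preimages. Identity \eqref{krop} at $\alpha=n+1/2$ reads $T_{(1-z)^{\alpha}}=(-1)^{n}T_{\overline{(1-z)^{\alpha}}}T_{Q^{1/2}}S^{n}$, and injectivity of $T_{\overline{(1-z)^{\alpha}}}$ then gives $T_{\overline{(1-z)^{\alpha}}}^{-1}(\mathcal{M}(a_{\alpha}))=T_{Q^{1/2}}(z^{n}H^{2})$. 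Taking closures and using the duality $(T_{Q^{1/2}}S^{n})^{*}=S^{*n}T_{Q^{-1/2}}$ yields
\begin{equation*}
\overline{T_{Q^{1/2}}(z^{n}H^{2})}^{H^{2}}=K^{\perp},\qquad K:=\{g\in H^{2}\colon T_{Q^{-1/2}}g\in\mathcal{P}_{n}\}.
\end{equation*}

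The crux is the identification $K=(1-z)^{1/2}\mathcal{P}_{n}$. Theorem \ref{jdr} applied to the rigid function $1-z$ gives $\ker T_{Q^{-1/2}}=\{0\}$, so $\dim K\leq n$. For $q\in\mathcal{P}_{n}$ a direct computation shows $T_{Q^{-1/2}}((1-z)^{1/2}q)=T_{\overline{(1-z)^{1/2}}}q$, and $T_{\overline{(1-z)^{1/2}}}$ maps $\mathcal{P}_{n}$ bijectively onto itself (its matrix in the monomial basis is upper triangular with $1$'s on the diagonal), so $(1-z)^{1/2}\mathcal{P}_{n}\subset K$; comparing dimensions forces equality. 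Applying $T_{\overline{(1-z)^{\alpha}}}$ to the orthogonal splitting $H^{2}=K^{\perp}\oplus K$ and noting $T_{\overline{(1-z)^{\alpha}}}((1-z)^{1/2}\mathcal{P}_{n})=\mathcal{A}_{n}$ gives $\mathcal{H}(b_{\alpha})=\overline{\mathcal{M}(a_{\alpha})}+\mathcal{A}_{n}$ as a direct sum. The main obstacle is the identification of $K$, which hinges both on the rigidity of $1-z$ (via Theorem \ref{jdr}) and on the precise action of $T_{\overline{(1-z)^{1/2}}}$ on polynomials.
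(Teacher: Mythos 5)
Your proposal is correct and its overall architecture coincides with the paper's: Corollary \ref{opiec} plus invertibility of $T_{Q^{\alpha}}$ for part (i), Theorem \ref{lemBB} for part (ii), rigidity of $a_{1/2}^2$ plus Theorem \ref{rigid} for part (iii), and the identification of $\ker T_{\overline{(1-z)^{\alpha}}/(1-z)^{\alpha}}$ with $(1-z)^{1/2}\mathcal{P}_n$ for part (iv) (your duality/dimension-count identification of $K$ is a slightly more explicit version of what the paper asserts directly). Two local steps differ in a way worth recording. In (ii) the paper shows $\mathrm{span}\{S^{*k}(1-z)^{\alpha}\}_{k=1}^{n}\subset\mathcal{P}_n+\mathcal{M}((1-z)^{\alpha})$ by an explicit induction on $k$, whereas you count dimensions: Theorem \ref{lemBB} bounds $\dim\bigl(\mathcal{H}(b_{\alpha})/\mathcal{M}((1-z)^{\alpha})\bigr)$ by $n$, and you saturate the bound with $\mathcal{P}_n$; this trades the induction for the extra (correct, and easy via non-integrability of $|t|^{2m-2\alpha}$ when $m<n$) verification that $\mathcal{P}_n\cap(1-z)^{\alpha}H^2=\{0\}$, and as a bonus it shows the sum in (ii) is direct. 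In (iii) the paper transfers a hypothetical nonzero element of $\ker T_{\overline{a}_{\alpha}/a_{\alpha}}$ into $\ker T_{\overline{(1-z)^{\alpha}}/(1-z)^{\alpha}}$, which requires the two-sided pointwise comparison \eqref{inequal} between $|a_{\alpha}|$ and $|1-z|^{\alpha}$; you instead observe that $a_{1/2}^2=(1-z)b_{1/2}^2$ with $b_{1/2}^2$ invertible in $H^{\infty}$ (by the corona lower bound already established in the paper), and that rigidity is preserved under multiplication by an invertible $H^{\infty}$ factor (if $g$ has the argument of $hf$ then $g/h\in H^1$ has the argument of $f$). This is cleaner and avoids \eqref{inequal} entirely, though it only covers $\alpha=1/2$ rather than the paper's stronger claim that $a_{\alpha}^2$ is rigid for all $0<\alpha\le 1/2$; that stronger claim is not needed for the theorem itself but is used in the Remark following it.
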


%%%%%%%%%%%%%%%%%%%%

\begin{proof}(i) We know from Corollary \ref{opiec} that for $\alpha>0$,
$$\mathcal{H}(b_{\alpha})=\mathcal{M}(\overline{a}_{\alpha})=\mathcal{M}(\overline{(1-z)^{\alpha}}).$$
We first observe that for $0<\alpha<1/2$ the operator
$T_{(1-z)^{\alpha}/\overline{(1-z)^{\alpha}}}$ is invertible. This
follows from
$$\frac{(1-e^{it})^{\alpha}}{\overline{(1-e^{it})}^{\alpha}}=e^{i\alpha(t-\pi)},\quad t\in(0,2\pi),$$
and the Devinatz-Widom Theorem.

Consequently,
$$\mathcal{M}(\overline{(1-z)^{\alpha}})=T_{\overline{(1-z)^{\alpha}}}H^2=T_{\overline{(1-z)^{\alpha}}}T_{\frac{(1-z)^{\alpha}}{\overline{(1-z)^{\alpha}}}}H^2=(1-z)^{\alpha}H^2.$$

(ii) Since $\mathcal{H}(b_{\alpha})$ contains
$\mathcal{M}(a_{\alpha})=\mathcal{M}((1-z)^{\alpha})$ and all
polynomials (see e.g. \cite[p. 25]{sarason}), to prove (ii) it is
enough to show that

$$\mathcal{H}(b_{\alpha})\subset\mathcal{P}_n+\mathcal{M}((1-z)^{\alpha}).$$

By Theorem \ref{lemBB} we have
$$\mathcal{H}(b_{\alpha})=\mathcal{M}(\overline{(1-z)^{\alpha}})=\mathcal{M}((1-z)^{\alpha})+\mathrm{span}\{S^{*}(1-z)^{\alpha},\ldots, S^{*n}(1-z)^{\alpha}\}.$$

Therefore, we only need to show that

\begin{displaymath}
\mathrm{span}\{S^{*}(1-z)^{\alpha},\ldots,
S^{*n}(1-z)^{\alpha}\}\subset\mathcal{P}_n+\mathcal{M}((1-z)^{\alpha}).
\end{displaymath}

Clearly,
\begin{displaymath}
\begin{split}
S^{*}(1-z)^{\alpha}&=\frac{(1-z)^{\alpha}-1}{z}=\frac{(1-z)^{\alpha}-(1-z)^n+(1-z)^n-1}{z}\\
&=S^{*}(1-z)^{n}-(1-z)^{\alpha}S^{*}(1-z)^{n-\alpha}\in
\mathcal{P}_n+\mathcal{M}((1-z)^{\alpha})
\end{split}
\end{displaymath}
($(1-z)^{n-\alpha}\in H^2$ since $n-\alpha>-1/2$). Now assume that
for any $1\leq k<n$,
$$S^{*k}(1-z)^{\alpha}\in \mathcal{P}_n+\mathcal{M}((1-z)^{\alpha}),$$
or, in other words,
$$S^{*k}(1-z)^{\alpha}=p_n+(1-z)^{\alpha}h_k\text{ for some }p_n\in \mathcal{P}_n\ \mathrm{and}\ h_k\in H^2.$$
Then
\begin{displaymath}
\begin{split}
S^{*(k+1)}(1-z)^{\alpha}&=S^{*}(S^{*k}(1-z)^{\alpha})=\frac{ p_n+(1-z)^{\alpha}h_k-p_n(0)-h_k(0)}{z}\\
&=\frac{ p_n+(1-z)^{\alpha}h_k-(1-z)^{\alpha}h_k(0)+(1-z)^{\alpha}h_k(0)-p_n(0)-h_k(0)}{z}\\
&= S^{*}p_n+h_k(0)S^{*}(1-z)^{\alpha} +(1-z)^{\alpha}S^{*}h_k\in
\mathcal{P}_n+\mathcal{M}((1-z)^{\alpha}).
\end{split}
\end{displaymath}
This completes the proof of (ii).

(iii) In view of Theorem \ref{rigid}, to prove (iii) it is enough to
show that $a_{1/2}^2$ is a rigid function. We actually prove that
$a_{\alpha}^2$ is rigid for every $0<\alpha\leq1/2$.

To this end, we  observe  that  for $\alpha>0 $,
\begin{equation}\label{inequal} \frac{1}{\sqrt{1+4^{\alpha}}}|1-z|^{\alpha} \leq
|a_{\alpha}(z)|\leq |1-z|^{\alpha},\quad  z\in\mathbb
D.\end{equation}
 This follows from \eqref{a} and the representation
of the outer function $$(1-z)^{\alpha}= \exp\left\{
\frac{\alpha}{2\pi }\int_0^{2\pi}
\frac{e^{it}+z}{e^{it}-z}\log|1-e^{it}|dt\right\}.$$ Thus we have
$$\frac{|a_{\alpha}(z)|}{|1-z|^{\alpha}}= \exp\left\{
\frac{1}{2\pi }\int_0^{2\pi} \frac{1-|z|^2}{|1-ze^{-it}|^2}\log\frac
1{\sqrt{1+|1-e^{it}|^{2\alpha}}}dt\right\}$$ which implies
inequalities \eqref {inequal}.

Now we use a reasoning analogous to that in \cite[(X--5)]{sarason}.
If $a_{\alpha}^2$ is not rigid for some $0<\alpha\leq1/2$, then by
Theorem \ref{jdr} there is a nonzero function $g$ in the kernel of
$T_{\overline{a}_{\alpha}/a_{\alpha}}$. Then
\begin{displaymath}
T_{\frac{\overline{(1-z)^{\alpha}}}{(1-z)^{\alpha}}}\left(\tfrac{(1-z)^{\alpha}g}{a_{\alpha}}\right)=P_+\left(\tfrac{\overline{(1-z)^{\alpha}}g}{a_{\alpha}}\right)=P_+\left(\tfrac{\overline{(1-z)^{\alpha}}g}{a_{\alpha}}\cdot
\tfrac{\overline{a}_{\alpha}}{\overline{a}_{\alpha}}\right)=T_{\frac{\overline{(1-z)^{\alpha}}}{\overline{a}_{\alpha}}}T_{\frac{\overline{a}_{\alpha}}{{a}_{\alpha}}}g=0,
\end{displaymath}
which means that $(1-z)^{\alpha}g/a_{\alpha}$ is a nonzero function
in the kernel of $T_{\overline{(1-z)^{\alpha}}/(1-z)^{\alpha}}$,
contrary to the fact that $(1-z)^{2\alpha}$ is rigid for
$0<\alpha\leq1/2$ (see, e.g., \cite[Section 6.8]{fm}).

(iv) We know that for every $\alpha>0$,
$$\mathcal{H}(b_{\alpha})=\mathcal{M}(\overline{a}_{\alpha})=\mathcal{M}(\overline{(1-z)^{\alpha}})=T_{\overline{(1-z)^{\alpha}}}H^2$$
and $\mathcal{M}(a_{\alpha})=\mathcal{M}((1-z)^{\alpha})$ is the
image under $T_{\overline{(1-z)^{\alpha}}}$ of the range of
$T_{(1-z)^{\alpha}/\overline{(1-z)^{\alpha}}}$, that is,
$$\mathcal{M}((1-z)^{\alpha})=T_{\overline{(1-z)^{\alpha}}}T_{\frac
{(1-z)^{\alpha}}{\overline{(1-z)^{\alpha}}}}H^2.
$$

It follows that the orthogonal complement of
$\mathcal{M}((1-z)^{\alpha})$ in the space
$\mathcal{M}(\overline{(1-z)^{\alpha}})$ is the image under
$T_{\overline{(1-z)^{\alpha}}}$ of $\ker T_{\overline{(1-z)^{\alpha}}/(1-z)^{\alpha}}$.

We now observe that for $\alpha=n+1/2$, $$\ker T_{\frac
{\overline{(1-z)^{\alpha}}}{(1-z)^{\alpha}}}=\ker
T_{\overline{z^n}}T_{\frac
{\overline{(1-z)^{1/2}}}{(1-z)^{1/2}}}=(1-z)^{1/2}\mathcal{P}_n,$$
where $\mathcal{P}_n$ is the set
 of all polynomials of degree at most $n-1$. Finally, note that if $p_n$ is in
 $\mathcal{P}_n$, then

 \begin{displaymath}
 \begin{split}
T_{\overline{(1-z)^{\alpha}}}\left((1-z)^{1/2}p_n
\right)&=P_+\left(\overline{(1-z)^{\alpha}}(1-z)^{1/2}p_n \right)=\\
&=P_+\left(\overline{(1-z)^{\alpha}}(1-z)^{1/2}
\right)p_n+P_+\left(P_{-}\left(\overline{(1-z)^{\alpha}}(1-z)^{1/2}\right)p_n
\right).
\end{split}
 \end{displaymath}
Our claim follows.
\end{proof}

The following corollary is just another statement of (ii) in Theorem
\ref{mejn}.

\begin{cor}
For any $n\in\mathbb{N}$ and $n-1/2<\alpha<n+1/2$ we have
\begin{equation*}
\mathcal{H}(b_{\alpha})=\mathcal{M}(a_{\alpha})+\mathcal{P}_n=\mathcal{M}(a_{\alpha})+\mathrm{span}\{T_{\overline{a}_{\alpha}}1,\ldots,
T_{\overline{a}_{\alpha}}z^{n-1}\}.
\end{equation*}
\end{cor}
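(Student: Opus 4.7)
The first equality in the statement is literally part (ii) of Theorem \ref{mejn}, so the new content is only the second equality
$$\mathcal{M}(a_{\alpha})+\mathcal{P}_n=\mathcal{M}(a_{\alpha})+\mathrm{span}\{T_{\overline{a}_{\alpha}}1,\ldots, T_{\overline{a}_{\alpha}}z^{n-1}\}.$$
My plan is in fact to prove the stronger identity
$$\mathrm{span}\{T_{\overline{a}_{\alpha}}1,\ldots, T_{\overline{a}_{\alpha}}z^{n-1}\}=\mathcal{P}_n,$$
as subspaces of $H^2$, which immediately yields the claim.

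The argument will be a direct Taylor-series computation. Writing the outer function as $a_{\alpha}(z)=\sum_{j=0}^{\infty} c_j z^j$, where $c_0=a_{\alpha}(0)>0$ by the definition of a pair, I will expand
$$T_{\overline{a}_{\alpha}}z^k=P_{+}\!\left(\overline{a}_{\alpha}\,z^k\right)=P_{+}\!\left(\sum_{j=0}^{\infty}\overline{c_j}\,z^{k-j}\right)=\sum_{j=0}^{k}\overline{c_j}\,z^{k-j},$$
and conclude that $T_{\overline{a}_{\alpha}}z^k$ is a polynomial of degree exactly $k$ with leading coefficient $\overline{c_0}=c_0>0$. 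The $n$ polynomials obtained for $k=0,1,\ldots,n-1$ then have pairwise distinct degrees, hence are linearly independent; since $\dim\mathcal{P}_n=n$, they span $\mathcal{P}_n$, completing the proof.

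I do not anticipate any genuine obstacle: the only input beyond the Taylor expansion is that $a_\alpha(0)>0$, which is built into the normalization of the pair $(b_\alpha,a_\alpha)$, and the whole argument is independent of the specific value of $\alpha$ in $(n-\tfrac12,n+\tfrac12)$.
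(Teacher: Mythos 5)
Your proposal is correct: the computation $T_{\overline{a}_{\alpha}}z^{k}=\sum_{j=0}^{k}\overline{c_j}z^{k-j}$ shows these are polynomials of degrees $0,1,\ldots,n-1$ with nonzero leading coefficient $c_0=a_{\alpha}(0)>0$, hence span $\mathcal{P}_n$ exactly. The paper offers no proof at all (it calls the corollary ``just another statement'' of Theorem \ref{mejn}(ii)), and your argument is precisely the routine verification being left implicit.
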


\begin{rem}
We observe that since $a_{\alpha}^2$ is rigid for all $0<\alpha\leq
1/2$, Theorem \ref{rigid} implies that the space
$\mathcal{M}(a_{\alpha})$ is dense in $\mathcal{H}(b_{\alpha})$ for
all such $\alpha$. However, for $0<\alpha< 1/2$ we have
$\mathcal{M}(a_{\alpha})=\mathcal{H}(b_{\alpha})$, while
$\mathcal{M}(a_{1/2})\subsetneq\mathcal{H}(b_{1/2})$. The latter
follows from the fact that every $h\in H^2$ satisfies
$|h(z)|=o((1-|z|)^{1/2})$ as $|z|\rightarrow 1^{-}$. Thus if
$f\in\mathcal{M}(a_{1/2})$, then $f(z)=(1-z)^{1/2}h(z)$, $h\in H^2$,
and
$$|f(z)|=|1-z|^{\frac12}|h(z)|=\left(\frac{|1-z|}{1-|z|}\right)^{\frac12}|h(z)|(1-|z|)^{\frac12}.$$
This shows that the nontangential limit of $f$ at $1$ is $0$. On the
other hand, $\mathcal{H}(b_{1/2})$ contains nonzero constant
functions, so $\mathcal{M}(a_{1/2})$ cannot be equal to
$\mathcal{H}(b_{1/2})$.
\end{rem}

\begin{cor}
If $n-1/2<\alpha <n+1/2$, $n\in\mathbb{N}$, and
$f\in\mathcal{H}(b_{\alpha})$, then there is a function $h$ in $H^2$
such that
$$f(z)=f(1)+f'(1)(z-1)+\ldots+\frac{f^{(n-1)}(1)}{(n-1)!}(z-1)^{n-1}+(1-z)^{\alpha}h(z).$$
\end{cor}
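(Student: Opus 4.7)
The plan is to reduce this corollary to Theorem~\ref{mejn}(ii) by identifying the polynomial part with the Taylor polynomial of $f$ at $z=1$. By Theorem~\ref{mejn}(ii), any $f \in \mathcal{H}(b_{\alpha})$ admits a decomposition
\[
f(z) = p(z) + (1-z)^{\alpha} h(z),
\]
where $p \in \mathcal{P}_n$ and $h \in H^2$. Since $p$ has degree at most $n-1$, I may expand it in powers of $(z-1)$ as $p(z) = \sum_{k=0}^{n-1} c_k (z-1)^k$, so that $c_k = p^{(k)}(1)/k!$. The task reduces to showing $c_k = f^{(k)}(1)/k!$ for $k = 0, 1, \dots, n-1$.

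The first thing to note is that by Proposition~\ref{deri}, since $\alpha > n - 1/2$, the nontangential boundary values $f^{(k)}(1)$ exist for $k = 0, \dots, n-1$. Hence it suffices to prove that for each such $k$, the remainder $g(z) := (1-z)^{\alpha} h(z)$ satisfies
\[
\lim_{z \to 1 \text{ n.t.}} g^{(k)}(z) = 0.
\]
Once this is established, differentiating $f = p + g$ and passing to the nontangential limit at $1$ yields $f^{(k)}(1) = p^{(k)}(1) = k!\, c_k$, which is exactly the desired identity.

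For the vanishing of $g^{(k)}$, I would apply the Leibniz rule:
\[
g^{(k)}(z) = \sum_{j=0}^{k} \binom{k}{j} \frac{d^{k-j}}{dz^{k-j}}\bigl[(1-z)^{\alpha}\bigr] \cdot h^{(j)}(z),
\]
so each summand has, up to constants, the form $(1-z)^{\alpha-(k-j)} h^{(j)}(z)$. The standard pointwise estimate for $H^2$ functions (obtained from the Cauchy integral formula applied on a disk of radius comparable to $1-|z|$) gives $|h^{(j)}(z)| \lesssim \|h\|_2 (1-|z|)^{-j-1/2}$ for $z \in \mathbb{D}$. Under nontangential approach to $1$ we have $|1-z| \asymp 1 - |z|$, hence
\[
\bigl|(1-z)^{\alpha-(k-j)} h^{(j)}(z)\bigr| \lesssim (1-|z|)^{\alpha - k - 1/2},
\]
uniformly in $j \le k$. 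For $k \le n-1$ the exponent $\alpha - k - 1/2$ is at least $\alpha - n + 1/2 > 0$, so each term tends to zero nontangentially, finishing the argument.

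The only step that requires any care is the pointwise derivative bound for $H^2$ functions; everything else is bookkeeping with the Leibniz rule and the hypothesis $\alpha > n - 1/2$. Since this bound is classical and the exponent $\alpha - k - 1/2$ is strictly positive in exactly the range where Proposition~\ref{deri} guarantees the existence of $f^{(k)}(1)$, the proof fits together cleanly: the decomposition from Theorem~\ref{mejn}(ii) supplies the polynomial, and the $H^2$ growth estimate ensures this polynomial coincides with the Taylor polynomial of $f$ at $1$.
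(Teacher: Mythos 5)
Your argument is correct and follows the same route as the paper: invoke Theorem~\ref{mejn}(ii) for the decomposition $f=p+(1-z)^{\alpha}h$, use Proposition~\ref{deri} for the existence of $f^{(k)}(1)$, and use the pointwise bound $|h^{(j)}(z)|\lesssim (1-|z|)^{-j-1/2}$ to conclude that the coefficients of $p$ in powers of $(z-1)$ are the Taylor coefficients of $f$ at $1$. The paper leaves the Leibniz-rule bookkeeping implicit, which you have simply written out in full.
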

\begin{proof}
It follows from Proposition \ref{deri} that $f$ and its derivatives
of order up to $n-1$ have nontangential limits at $1$, say
$f(1),f'(1),\ldots,f^{(n-1)}(1)$. By Theorem \ref{mejn}(ii), $f$ can
be written as
$$f(z)=p_n(z)+(1-z)^{\alpha}h(z)=\sum_{k=0}^{n-1}a_k(z-1)^k+(1-z)^{\alpha}h(z),\quad h\in
H^2.$$ Since every $h$ in $H^2$ satisfies $$|h^{(k)}(z)|\leq
\frac{c_k}{(1-|z|)^{k+\frac12}},$$ we find that
$$a_k=\frac{p_n^{(k)}(1)}{k!}=\frac{f^{(k)}(1)}{k!}\quad\text{for
}k=0,1,\ldots,n-1.$$
\end{proof}

The next theorem describes the space $\mathcal{H}(\tilde
{b}_{\alpha})$ where $\tilde {b}_{\alpha}$ is an outer function from
the unit ball of $H^{\infty}$ whose Pythagorean mate is $\left(\frac
{1-z}2\right)^{\alpha}$, $\alpha>0$.

\begin{thm}
For $\alpha>0$ let  $\tilde {a}_{\alpha}(z)=\left(\frac
{1-z}2\right)^{\alpha}$ and let $\tilde {b}_{\alpha}$ be  the outer
function such that $(\tilde {b}_{\alpha},\tilde {a}_{\alpha})$ is a
pair. Then
$$ \mathcal{H}(\tilde {b}_{\alpha})=\mathcal{H}(b_{\alpha})$$
\end{thm}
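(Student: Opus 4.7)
The plan is to run the argument of Corollary \ref{opiec} for the pair $(\tilde{b}_{\alpha},\tilde{a}_{\alpha})$. Once we verify that this is a corona pair, we obtain $\mathcal{H}(\tilde{b}_{\alpha})=\mathcal{M}(\overline{\tilde{a}_{\alpha}})$, and because $\tilde{a}_{\alpha}(z)=2^{-\alpha}(1-z)^{\alpha}$ differs from $(1-z)^{\alpha}$ only by a positive constant, $\mathcal{M}(\overline{\tilde{a}_{\alpha}})=\mathcal{M}(\overline{(1-z)^{\alpha}})$ as sets; Corollary \ref{opiec} then identifies the right-hand side with $\mathcal{H}(b_{\alpha})$. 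Note that one inclusion, $\mathcal{H}(b_{\alpha})\subset\mathcal{H}(\tilde{b}_{\alpha})$, follows at once from Corollary \ref{korek} applied to $\tilde{\varphi}_{\alpha}/\varphi_{\alpha}=2^{\alpha}\tilde{b}_{\alpha}\in H^{\infty}$, but the reverse inclusion cannot be obtained this way because $\tilde{b}_{\alpha}$ vanishes at the boundary point $-1$, so $1/\tilde{b}_{\alpha}\notin H^{\infty}$. This forces the corona-pair route.

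To verify the corona condition I would first show that both functions extend continuously to $\overline{\mathbb{D}}$. The function $\tilde{a}_{\alpha}$ does so manifestly, vanishing only at $z=1$. For $\tilde{b}_{\alpha}$, the boundary modulus $|\tilde{b}_{\alpha}(e^{it})|^{2}=1-|(1-e^{it})/2|^{2\alpha}$ is continuous on $\mathbb{T}$, strictly positive except at $t=\pi$, and near $t=\pi$ its logarithm has an integrable singularity of order $\log|t-\pi|$. Standard theory of outer functions then gives $\tilde{b}_{\alpha}\in C(\overline{\mathbb{D}})$, with $\tilde{b}_{\alpha}(-1)=0$ and $\tilde{b}_{\alpha}$ zero-free everywhere else on $\overline{\mathbb{D}}$ (in $\mathbb{D}$ because $\tilde{b}_{\alpha}$ is outer, on $\mathbb{T}\setminus\{-1\}$ because the boundary modulus is positive there).

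Continuity on $\overline{\mathbb{D}}$ then reduces the corona bound to a compactness argument: on $\mathbb{T}$ the pair relation $|\tilde{a}_{\alpha}|^{2}+|\tilde{b}_{\alpha}|^{2}=1$ forces $|\tilde{a}_{\alpha}|+|\tilde{b}_{\alpha}|\ge 1$; in $\mathbb{D}$ both functions are nonzero; and at the two critical points $z=\pm 1$ exactly one of them has modulus $1$. Hence $|\tilde{a}_{\alpha}|+|\tilde{b}_{\alpha}|$ is continuous and strictly positive on the compact set $\overline{\mathbb{D}}$, so it is bounded below by a positive constant. This is precisely the corona condition, and it yields $\mathcal{H}(\tilde{b}_{\alpha})=\mathcal{M}(\overline{\tilde{a}_{\alpha}})$ exactly as in the proof of Corollary \ref{opiec}.

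The only delicate step is the continuity of $\tilde{b}_{\alpha}$ at $z=-1$. Should a direct appeal to the standard ``continuous boundary modulus implies the outer function is continuous up to the boundary'' result not be desirable, one can instead argue by cases: near $z=-1$ one estimates $|\tilde{a}_{\alpha}|$ from below using its explicit continuous extension (with $\tilde{a}_{\alpha}(-1)=1$), while on the complementary compact subset of $\overline{\mathbb{D}}$ one bounds $|\tilde{b}_{\alpha}|$ from below via its Poisson-integral representation, using that $\log|\tilde{b}_{\alpha}(e^{it})|$ is bounded below away from $t=\pi$.
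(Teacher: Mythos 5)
Your proposal is correct and follows essentially the same route as the paper: both reduce the theorem to showing that $(\tilde b_{\alpha},\tilde a_{\alpha})$ is a corona pair (so that $\mathcal H(\tilde b_{\alpha})=\mathcal M(\overline{\tilde a_{\alpha}})=\mathcal M(\overline{(1-z)^{\alpha}})=\mathcal H(b_{\alpha})$ by Corollary \ref{opiec}), and both verify the corona condition by a continuity--compactness argument on $\overline{\mathbb D}$ that treats neighbourhoods of $z=1$ and $z=-1$ separately. One small caution: the blanket principle ``continuous boundary modulus implies the outer function is continuous on $\overline{\mathbb D}$'' is false in general, but this does not affect your proof, since only $|\tilde b_{\alpha}|$ enters the corona bound and its continuity away from $-1$ follows from the Poisson-integral representation of $\log|\tilde b_{\alpha}|$ --- precisely the fallback you describe, and precisely what the paper's own argument uses.
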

\begin{proof} It is enough to show that $(\tilde {b}_{\alpha}, \tilde
{a}_{\alpha})$ is  a corona pair. The function  $\tilde
{a}_{\alpha}$ is continuous on $\overline{\mathbb D}$  and vanishes
only at $1$. %The function $|\tilde {b}_{\alpha}| $ is also
%continuous on $\overline{\mathbb D}$ and vanishes only at $-1$.
Since $|\tilde {b}_{\alpha}(1)|=\tilde {a}_{\alpha}(-1)=1,$ there
exist $\delta>0$ such that $|\tilde {b}_{\alpha}(z)|>1/2\ $  on
$D_1=\overline{\mathbb D}\cap\{z: |z-1|<\delta\}$ and $|\tilde
{a}_{\alpha}(z)|>1/2$  on $ D_2=\overline{\mathbb D}\cap\{z:
|z+1|<\delta\}$. Then the continuous function $|\tilde
{b}_{\alpha}|^2+|\tilde {a}_{\alpha}|^2$ is positive on the compact
set $\overline{\mathbb D}\setminus (D_1\cup D_2)$, so it is bounded
from below by a strictly positive number $\varepsilon>0$.
\end{proof}
\begin{rem}
Since $\tfrac{1-z}2$ is the Pythagorean mate for
$\tfrac{1+z}2$, we remark that it follows from \cite{ross} that for $\alpha>0$,
$$\mathcal{H}\left(\left(\tfrac {1+z}2\right)^{\alpha}\right)=\mathcal{H}\left(\tfrac {1+z}2\right)=c+(1-z)H^2$$
as sets.
\end{rem}

Finally, we remark that if $u$ is a finite Blaschke product and
$b_{\alpha}$ is given by \eqref{b}, then
\begin{equation}\label{bal}
\mathcal{H}(ub_{\alpha})=\mathcal{H}(b_{\alpha}).
\end{equation}
Since every function in $\mathcal{H}(u)$  is holomorphic  in
$\overline{\mathbb D}$ (see, e.g. \cite[Sec. 14.2]{fm})  and
$\mathcal{H}(b_{\alpha})$ is invariant under multiplication by
functions holomorphic in $\overline{\mathbb D}$ (see, e.g. \cite[
(IV-6)]{sarason}),  \eqref{bal} follows from the equality
$$
\mathcal{H}(ub_{\alpha})=\mathcal{H}(u)+u\mathcal{H}(b_{\alpha}).$$

\begin{qe} Can one
characterize all inner functions $u$ for which equality \eqref{bal}
holds?
\end{qe}

\end{document}